\DeclareRobustCommand{\qed}{%
  \ifmmode 
  \else \leavevmode\unskip\penalty9999 \hbox{}\nobreak\hfill
  \fi
  \quad\hbox{\qedsymbol}}
\newcommand{\openbox}{\leavevmode
  \hbox to.77778em{%
  \hfil\vrule
  \vbox to.675em{\hrule width.6em\vfil\hrule}%
  \vrule\hfil}}
\newcommand{\qedsymbol}{\openbox}
\newenvironment{proof}[1][\proofname]{\par
  \normalfont
  \topsep6\p@\@plus6\p@ \trivlist
  \item[\hskip\labelsep\itshape
    #1\@addpunct{.}]\ignorespaces
}{%
  \qed\endtrivlist
}
\newcommand{\proofname}{Proof}
\font\authorfont=cmti12
\newtheorem{lemma}{Lemma}[section]
\newtheorem{df}[lemma]{Definition}
\newtheorem{thm}[lemma]{Theorem}
\newtheorem{prop}[lemma]{Proposition}
\newcommand{\nn}{\mathbb{N}}
\newcommand{\rr}{\mathbb{R}}
\def\x{\times}
\def\ox{\otimes}
\def\I{{\mathcal I}}
\def\H{{\mathbb H}}
\def\HH{\H_0}
\def\L{{\mathcal L}}
\def\P{\text{P}}
\def\esp#1{\text{E}[ #1 ]}
\def\D{{\mathbb D}}
\def\Hol{{\mathcal H}}
\def\Id{{\text Id}}
\def\n{\nabla}
\begin{document}
\noindent
\begin{center}{ \Large\bf Stochastic Volterra Equations\\[2mm]
    with Singular Kernels}
\end{center}
\vskip 2pc
\begin{center} \authorfont L. Coutin and L. Decreusefond
\end{center}

\vspace{5mm}
\noindent

\thispagestyle{empty}
\section{Introduction}
Motivated by the potential applications to the fractional Brownian
motion (cf. \cite{decreusefond96.2}), we study Volterra stochastic
differential of the form~:
\begin{equation}
X_t =  x+ \int_0^tK(t,s)b(s,X_s)ds + \int_0^tK(t,s) \sigma(s,X_s)
\,dB_s ,\tag{E} \label{eq:sdefbm}
\end{equation}
where $(B_s, \, s\in [0,1])$ is a one-dimensional standard Brownian
motion and $(K(t,s), \, t,s \in [0,1])$ is a deterministic kernel
whose properties will be precised below but for which we don't assume
any boundedness property.

Actually, when $\sigma$ is a constant and $K$ is given by
(\ref{defdekh}), we obtain~:
\begin{equation*}
X_t =  x+ \int_0^tK(t,s)b(s,X_s)ds +\sigma W_t^H,
\end{equation*}
where $W^H$ is the fractional Brownian motion of Hurst parameter $H$
-- see the example below.  In this particular case, the main feature
is that $K$ is highly singular as a kernel but the integral map 
canonically associated to it, i.e.,
 \begin{displaymath}
   Kf(t)=\int_0^t  K(t,s)f(s)\, ds,
 \end{displaymath}
 is a regularizing operator. That explains why we work as much as
 possible with the properties of the map $K$ and not with those of the
 kernel $K(t,s).$ The problem is then in
 the treatment of the stochastic integral. Actually, one of the main
 difficulties is to control the (H\"older) regularity, with respect to
 $t,$ of the stochastic integral in the right-hand-side of (E). This
 has been the object of a previous paper \cite{decreusefond98.3}, the
 hypothesis of which we simplify here. With the result obtained in
 that paper, the proof of existence and uniqueness of the solution of
 (E) is achieved as usual by a fixed point technique.
 
 However, some other problems arise when we study the Gross-Sobolev
 regularity of the solution of (E)  and the expression of its derivative.
 They are of two sorts : on one hand the singularity of the kernel and
 on the other hand the fact that the Gross-Sobolev derivative of the
 solution is the solution of a linear but time dependent
 stochastic differential equation.  We eventually give a somewhat
 explicit expression for the Gross-Sobolev derivative of the solution -- in this part, the approach owes much to the ideas
 developed in \cite{hirsch88}.
 
 Note that the specific form of the drift ensures both a symmetric
 role to $b$ and $\sigma$ and the existence of weak solutions to
 (\ref{eq:sdefbm}) -- cf.  \cite{decreusefond97.2} for the application
 of this notion to the non-linear filtering theory with fractional
 Gaussian noise. For other stochastic differential equations related
 to the fractional Brownian motion, we refer to
 \cite{lin96,lyons94,zaehle99}.
 
 The equations we have to deal with are of Volterra type but our work
 does not seem to be subsumed by previous articles on this subject (see
 for instance \cite{lewin94,pardoux90,protter85}) because our kernel
 is weakly regular and we are looking for classical solutions and not
 distribution-valued ones. Our work is only done in one dimension but
 it is straightforward to extend it to higher dimensions.

This paper is organized as follows: in the next section, we show how
hypothesis (A) is sufficient to entail those of the main theorem of
\cite{decreusefond98.3}. As an example, we address the case of the
fractional Brownian motion. In Section 3, we prove the main theorem of
existence and uniqueness of the solution of (E). In Section 4, we
prove that under an extra  boundedness assumption on $\sigma,$ this solution
is Gross-Sobolev differentiable and we give an integral representation of it.  
\section{Preliminaries}
\label{sec:preliminaries}

Consider a measurable kernel $(K(t,s), s,t\in [0,1])$ and denote also
by $K$ the (formal) linear map~:
\begin{displaymath}
  Kf(t)=\int_0^t K(t,s) f(s)\, ds.
\end{displaymath}
\noindent \textbf{\textsc{Hypothesis} (A)--}\textsl{
  We assume once for all that there exist $\gamma >0 $ such that $K$
  is a continuous both from $\L^1([0,t])$ into $\I_{\gamma,2}([0,t])$ and from
  $\L^2([0,t])$ into $\I_{\gamma+1/2,2}([0,t])$ for any $t\in [0,1].$}

\begin{thm}
\label{thm:regularite}
Let $u$ be an adapted process belonging to $L^{r}(\Omega\x [0,1],
\P\ox dt). $ If $r \ge 2,$ the family of random variables
$(M_t(u)=\int_0^t K(t,s) u_s \, dB_s, \, t\in [0,1])$ has a version
whose trajectories belong almost surely to $\I_{\gamma-\epsilon,r}$
for any $\epsilon\in (0,\gamma).$ Consequently, if $\gamma > 1/r,$ the
sample-paths are almost surely $(\gamma\! -\!1/r-\!\epsilon)$-H\"older
continuous for any $\epsilon\in (0,\gamma).$ Moreover, the following maximal
inequality holds~:
  \begin{equation}
    \label{eq:1}
    \lVert \, M(u) \rVert_{L^r(\Omega;\I_{\gamma-\epsilon,r}([0,t]))}\le \, c \lVert u
    \rVert_{L^r(\Omega \x [0,t])}.
  \end{equation}
\end{thm}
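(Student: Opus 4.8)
The plan is to use the factorization (stochastic Fubini) method, in the variant suited to Volterra kernels that are unbounded but regularizing. Recall that $\I_{\beta,p}([0,t])$ is the image of $\L^p([0,t])$ under the Riemann--Liouville operator $I^{\beta}_{0+}$, which is an isometry onto it; hence the two continuity properties in Hypothesis~(A) are exactly the factorizations $K=I^{\gamma}_{0+}\circ\widetilde K$ and $K=I^{\gamma+1/2}_{0+}\circ\widehat K$, where $\widetilde K$ is continuous from $\L^1([0,t])$ into $\L^2([0,t])$ and $\widehat K$ is continuous on $\L^2([0,t])$, with operator norms bounded uniformly in $t\in[0,1]$; in particular $\widetilde K=I^{1/2}_{0+}\circ\widehat K$ on $\L^2$. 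Fix $\epsilon\in(0,\gamma)$ and set
\begin{displaymath}
  L:=I^{\epsilon}_{0+}\circ\widetilde K=I^{\epsilon+1/2}_{0+}\circ\widehat K ,
\end{displaymath}
so that $K=I^{\gamma-\epsilon}_{0+}\circ L$ and $L$ inherits the two regularizing bounds $L\colon\L^1([0,t])\to\I_{\epsilon,2}([0,t])$ and $L\colon\L^2([0,t])\to\I_{\epsilon+1/2,2}([0,t])$. Writing $L(\sigma,s)$ for the kernel of $L$ and using $K(t,s)=\Gamma(\gamma-\epsilon)^{-1}\int_s^t(t-\sigma)^{\gamma-\epsilon-1}L(\sigma,s)\,d\sigma$, a stochastic Fubini theorem --- legitimate here thanks to the integrability furnished by~(A) and by $u\in L^r(\Omega\x[0,1])$ --- gives, for almost every $t$ and almost surely,
\begin{displaymath}
  M_t(u)=\bigl(I^{\gamma-\epsilon}_{0+}Z\bigr)(t),\qquad Z_\sigma:=\int_0^\sigma L(\sigma,s)\,u_s\,dB_s .
\end{displaymath}
Since $I^{\gamma-\epsilon}_{0+}$ is an isometry from $\L^r([0,t])$ onto $\I_{\gamma-\epsilon,r}([0,t])$, this already exhibits a version of $M(u)$ whose trajectories lie in $\I_{\gamma-\epsilon,r}$, and the whole theorem --- the maximal inequality~(\ref{eq:1}) included --- is reduced to the estimate $\esp{\lVert Z\rVert^{r}_{\L^r([0,t])}}\le c\,\esp{\lVert u\rVert^{r}_{\L^r([0,t])}}$.

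When $r=2$ this last estimate is immediate: the $\L^1$-bound gives $K(\cdot,s)=I^{\gamma}_{0+}\varphi_s$ with $\lVert\varphi_s\rVert_{\L^2([0,t])}$ bounded uniformly in $s$, hence $L(\cdot,s)=I^{\epsilon}_{0+}\varphi_s$ has $\L^2$-norm uniformly bounded as well, and It\^o's isometry together with Fubini yield
\begin{displaymath}
  \esp{\lVert Z\rVert^{2}_{\L^2([0,t])}}=\esp{\int_0^t u_s^{2}\Bigl(\int_s^t L(\sigma,s)^{2}\,d\sigma\Bigr)ds}\le c\,\esp{\lVert u\rVert^{2}_{\L^2([0,t])}} .
\end{displaymath}
For $r>2$ the natural attack is the Burkholder--Davis--Gundy inequality applied to $Z_\sigma$, followed by H\"older's inequality in $s$ with exponents $r/2$ and $r/(r-2)$, and then integration in $\sigma$; this bounds $\esp{\lVert Z\rVert^r_{\L^r}}$ by $\esp{\lVert u\rVert^r_{\L^r}}$ times a mixed-norm quantity of the form $\int_0^t\bigl(\int_0^\sigma|L(\sigma,s)|^{2r/(r-2)}\,ds\bigr)^{(r-2)/2}\,d\sigma$, whose finiteness has to be extracted from the two regularizing properties of $L$ (the extra half-derivative in the $\L^2$-bound being exactly what makes the exponent $2r/(r-2)$ affordable). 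I expect this deterministic kernel estimate to be the main obstacle; it is in fact the content that was isolated and proved in~\cite{decreusefond98.3}, so concretely I would verify that Hypothesis~(A) implies the hypotheses of the main theorem of that paper and then invoke it directly, rather than reprove it here.

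Finally, the H\"older statement follows from the classical embedding $\I_{\beta,r}([0,t])\hookrightarrow\Hol^{\,\beta-1/r}([0,t])$, valid whenever $\beta r>1$: applying it with $\beta=\gamma-\epsilon$ --- admissible as soon as $\gamma>1/r$ and $\epsilon$ is taken small enough --- and renaming the free parameter gives the announced almost sure $(\gamma-1/r-\epsilon)$-H\"older continuity of the sample paths.
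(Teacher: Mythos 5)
Your factorization sketch is the natural direct route, the $r=2$ computation is sound, and the final H\"older embedding step is correct; but as a proof the proposal has a genuine gap, and you locate it yourself. Everything is reduced to the deterministic mixed-norm estimate $\int_0^t\bigl(\int_0^\sigma|L(\sigma,s)|^{2r/(r-2)}\,ds\bigr)^{(r-2)/2}d\sigma<\infty$, which you acknowledge you cannot extract from the operator-level bounds of Hypothesis~(A), and your exit --- ``verify that (A) implies the hypotheses of the main theorem of \cite{decreusefond98.3} and invoke it'' --- is announced but never carried out. That verification is not a formality: it is the \emph{entire} content of the paper's proof of this theorem. Moreover, the hypotheses you would have to check are not the pointwise kernel integrability your Burkholder--Davis--Gundy/H\"older computation calls for. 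According to the paper they are functional-analytic: (I) $K$ maps $\L^2$ continuously into ${\mathcal B}$, the bounded functions, and (II)--(III) $K$ and $K_\delta=I^{-\delta}_{0^+}\circ K$ are Hilbert--Schmidt from $\L^2$ into itself. The paper obtains (I) from $\I_{\gamma+1/2,2}\subset\Hol_\gamma\subset{\mathcal B}$, and (II)--(III) from the fact that for $\beta>1/2$ the embedding $\I_{\beta,2}\hookrightarrow\L^2$ is Hilbert--Schmidt (the radonifying-maps result of \cite{schwartz70}), combined with the continuity of $K_\delta$ from $\L^2$ into $\I_{1/2+\epsilon,2}$ furnished by~(A). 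None of this appears in your proposal, so the citation you lean on is left unconnected to Hypothesis~(A).

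Two further points if you pursue the direct route instead. First, $L=I^{\epsilon}_{0^+}\circ\widetilde K$ is defined as an operator composition, and passing from the operator identity $K=I^{\gamma-\epsilon}_{0^+}\circ L$ to the pointwise-in-$s$ representation $K(\cdot,s)=I^{\gamma-\epsilon}_{0^+}L(\cdot,s)$ (which is what your stochastic Fubini step actually uses) requires an approximate-identity argument giving $K(\cdot,s)\in\I_{\gamma,2}$ with uniformly bounded norm for a.e.~$s$; you do this implicitly for $r=2$ but it should be made explicit before any kernel of $L$ is written down. Second, the passage from ``a.s.\ membership in $\I_{\gamma-\epsilon,r}$ for a.e.\ $t$'' to the existence of a genuine version of $(M_t(u))_{t\in[0,1]}$, and the derivation of the maximal inequality \eqref{eq:1}, both hinge on the unproved $L^r$-bound on $Z$, so the theorem's three assertions all remain conditional on the missing estimate.
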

\begin{proof}
  It is sufficient to prove that hypothesis I to III of
  \cite{decreusefond98.3} are satisfied. Let $\delta=\gamma-\epsilon,
  $ we have to show that
\begin{enumerate}
\item $K$ is continuous from $\L^2$ into $\mathcal B,$ the space of
  bounded functions on $[0,1].$
\item $K$ and $K_\delta=I_{0^+}^{-\delta}\circ K $ are Hilbert-Schmidt
  from $\L^2$ into itself,
\end{enumerate}
Since $\gamma > 0,$ it is well known (see section A of the appendix)
that $\I_{\gamma+1/2,1/2}\subset \Hol_{\gamma}\subset {\mathcal B}$
and the first point follows.

Since $\gamma >0, $ the embedding of $\I_{\gamma+1/2,2}$ in $\L^2$ is
Hilbert-Schmidt  (see \cite{schwartz70}) and so is  $K$
from $\L^2$ into itself.
Moreover, $K_\delta$ is continuous from $\L^2$ into
$\I_{1/2+\epsilon,2}$ and the embedding of $\I_{1/2+\epsilon,2}$ in
$\L^2$ is Hilbert-Schmidt, hence $K_\delta$ is Hilbert-Schmidt from $\L^2$
into itself.
\end{proof}

As an example (in fact the motivating one), consider $K=K_H$ the
kernel which is related to the fractional Brownian motion.  For any
$H$ in $(0,1)$, the fractional Brownian motion of index (Hurst
parameter) $H$, $\{W_t^H;\ t\in[0,1]\}$ is the unique centered
Gaussian process whose covariance kernel is given by
  \begin{equation*}
    R_H(s,t)=\esp{W_s^HW_t^H}=
    \frac{V_H}{2}\Bigl( s^{2H}+t^{2H}-|t-s|^{2H}\Bigr)
  \end{equation*}
  where
\begin{equation*}
  V_H= \frac{\Gamma(2-2H)\cos (\pi H)}{\pi H (1-2H)}.
\end{equation*}
It has been proved (see \cite{decreusefond96.2}) that there exists a
standard Brownian motion such that almost surely~:
\begin{equation*}
  W_t^H=\int_0^t K_H(t,s)\, dB_s
\end{equation*}
where
\begin{equation}
  \label{defdekh}
  K_H(t,r)=\frac{(t-r)^{H- \frac{1}{2}}}{\Gamma(H+\frac{1}{2})}
F(\frac{1}{2}-H,H-\frac{1}{2}, H+\frac{1}{2},1- 
\frac{t}{r})1_{[0,t)}(r).
\end{equation}
The Gauss hyper-geometric function $F(\alpha,\beta,\gamma,z)$ (see
\cite{nikiforov88}) is the analytic continuation on ${\mathbb C}\times
{\mathbb C}\times {\mathbb C} \backslash \{-1,-2,\ldots \}\times
\{z\in {\mathbb C}, Arg |1-z| < \pi\}$ of the power series
 \begin{displaymath}
    \sum_{k=0}^{+ \infty} \frac{(\alpha)_k(\beta)_k}{(\gamma)_k 
k!}z^k.
 \end{displaymath}
 Here $(\alpha)_k$ denotes the Pochhammer symbol defined by
\begin{displaymath}
  (a)_0=1 \text{ and } (a)_k =
\frac{\Gamma(a+k)}{\Gamma(a)}=a(a+
1)\dots (a+k-1).
\end{displaymath}
It is well known from \cite{samko93} that $K_H$ maps continuously
$\L^p$ into $\I_{H+1/2,p}$ for any $p\ge 1,$ so that, in this case,
hypothesis (A) is fulfilled for any $\gamma < H,$ since
$\I_{H+1/2,1}\subset \I_{H-\epsilon, 2}$ for any $\epsilon$
sufficiently small.
\section{Existence and uniqueness of the solution of
  (\protect\ref{eq:sdefbm})}
\label{sec:exist-uniq-solut}

In the following, we denote by $c$ any irrelevant constant appearing
in the computations.
 
\begin{df}
  By a solution of the  equation (\ref{eq:sdefbm}), we
  mean a real-valued, progressively measurable stochastic process $
  X=\{X_t,\ t \in I\} $ such that $X$ belongs to $L^2(\Omega\times
  [0,1], \P\ox dt)$ and for any $t,$ $X_t$ is a.s. a  solution of (E).
\end{df}

\textsc{In the sequel, $r$ will denote a fixed real strictly greater than $\max(2,\gamma^{-1}).$}

\begin{thm} \label{existence} 
 Let $ b $ and $ \sigma $ be $L$--Lipschitz continuous with respect
  to their second variable, uniformly with respect to their first
  variable : For all t in $[0,1] $, for all x, y in $ {\mathbf R} $,
  \[ |b(t,x)-b(t,y)| + |\sigma(t,x)-\sigma(t,y)|
  \leq L |x-y|. \] Assume also that there exist $ x_0 \text{ and } y_0
  $ in $ {\rr}, $ such that $b(.,x_0)$ and $\sigma(.,y_0)$ belong to
  $\L^r.$ The differential equation (\ref{eq:sdefbm}) has then a
  unique continuous solution which belongs to
  $\Hol_{\gamma-1/r-\epsilon},$ for any $\epsilon \in (0,\gamma).$
\end{thm}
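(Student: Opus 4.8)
The plan is to use the standard Picard iteration / Banach fixed point argument, carried out in a space where the contraction constant can be made small by iterating enough times (or by working on small subintervals and patching). The natural space to work in is $\Hol_{\gamma-1/r-\epsilon}([0,t])$-valued $L^r(\Omega)$ processes, but since the Hölder norm is not so convenient for contraction estimates under the stochastic integral, I would instead set up the iteration in $L^r(\Omega\x[0,T])$ for a suitably small $T$, using Theorem~\ref{thm:regularite} to get the $L^r(\Omega;\I_{\gamma-\epsilon,r})$ bound — and hence the Hölder regularity — for free once a fixed point is found.

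First I would define the map $\Phi$ sending an adapted process $Y\in L^r(\Omega\x[0,1])$ to
\[
\Phi(Y)_t = x + \int_0^t K(t,s)b(s,Y_s)\,ds + \int_0^t K(t,s)\sigma(s,Y_s)\,dB_s.
\]
One checks $\Phi$ maps $L^r(\Omega\x[0,1])$ into itself: for the drift term use that $K$ is continuous from $\L^1$ into $\I_{\gamma,2}\subset\mathcal B$ together with the Lipschitz bound and the hypothesis $b(\cdot,x_0)\in\L^r$; for the stochastic term use the maximal inequality~(\ref{eq:1}) together with $\sigma(\cdot,y_0)\in\L^r$ and the Lipschitz bound. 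Second, for two processes $Y,Z$ I would estimate $\lVert\Phi(Y)-\Phi(Z)\rVert_{L^r(\Omega\x[0,T])}$. The drift difference is controlled, using Hypothesis~(A) and Jensen/H\"older in the $s$-variable, by $c\,T^{\alpha}\lVert Y-Z\rVert_{L^r(\Omega\x[0,T])}$ for some $\alpha>0$; the stochastic difference is controlled by~(\ref{eq:1}) by $c\,\lVert \sigma(\cdot,Y)-\sigma(\cdot,Z)\rVert_{L^r(\Omega\x[0,T])}\le cL\lVert Y-Z\rVert_{L^r(\Omega\x[0,T])}$. The second estimate has no small factor, which is the main obstacle: the stochastic term alone need not be a contraction on any interval.

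The way around this is the classical one: iterate $\Phi$ and track the constants. Using the maximal inequality in the sharper form that controls $\sup_{s\le t}$ (or, equivalently, that the $\I_{\gamma-\epsilon,r}([0,t])$ norm dominates the sup-norm on $[0,t]$), one shows by induction that
\[
\lVert \Phi^n(Y)-\Phi^n(Z)\rVert_{L^r(\Omega\x[0,1])}^r \le \frac{(cL)^{n}\, }{n!}\,\lVert Y-Z\rVert_{L^r(\Omega\x[0,1])}^r,
\]
the factorial arising from the time-integral iterated $n$ times (a Gronwall-type phenomenon). Hence some $\Phi^n$ is a strict contraction on $L^r(\Omega\x[0,1])$, which yields a unique fixed point $X$. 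Finally, I would note that $X=\Phi(X)$ together with Theorem~\ref{thm:regularite} and the $\L^1\to\I_{\gamma,2}$ continuity of $K$ shows that $X$ has a version in $\I_{\gamma-\epsilon,r}\subset\Hol_{\gamma-1/r-\epsilon}$ (using $r>\gamma^{-1}$), and that any two continuous solutions, being fixed points of $\Phi$, coincide. The delicate points to get right are the precise exponent gains from Hypothesis~(A) in the drift estimate and the justification of the factorial decay for the stochastic part — the latter is where one must be careful that the constant $c$ in~(\ref{eq:1}) does not itself grow with the number of iterations.
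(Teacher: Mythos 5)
Your proposal is correct and follows essentially the same route as the paper: a Picard/fixed-point iteration in $L^r(\Omega\times[0,1])$, with the drift term controlled via Hypothesis (A) and the embedding $\I_{\gamma+1/2,2}\subset\mathcal B$, the stochastic term via the maximal inequality (\ref{eq:1}), the factorial decay coming from the iterated time integral (Gronwall), and the H\"older regularity read off from the $L^r(\Omega;\I_{\delta,r})$ convergence together with $\I_{\delta,r}\subset\Hol_{\delta-1/r}$ for $\delta>1/r$. The only cosmetic difference is that you phrase the conclusion as ``some power of $\Phi$ is a strict contraction'' while the paper shows directly that the Picard sequence is Cauchy; the constant in (\ref{eq:1}) is uniform in $t\in[0,1]$, so the caveat you raise at the end is harmless.
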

\begin{proof}
  The proof proceeds as usual by a fixed point technique. It is
  sufficient to note that for any $u$ and $v$ two progressively
  measurable processes belonging to $L^r(\Omega \x [0,1],\P \ox dt),$ we have~:
  \begin{enumerate}
  \item The process $\int_0^t K(t,s) u_s ds + \int_0^t K(t,s) v_s \,
    dB_s$ is continuous (according to (A)  and theorem
    \ref{thm:regularite}) and adapted, hence progressively measurable;
  \item since $\I_{\gamma+1/2,2}$ is continuously embedded in
    ${\mathcal B},$ according to hypothesis (A) , for any
    $1/r< \delta<\gamma,$ 
  \begin{align}
    \esp{\sup_{s\le t} | \int_0^t K(t,s) u_s \, ds|^r} & \le\esp{ \lVert
    \int_0^. K(.,s) u_s \, ds\rVert_{\I_{\delta,r}}^r}\label{eq:3}\\
&  \le \, c\,  \esp{ \int_0^t |u_s|^r \, ds};\label{eq:4}
  \end{align}
\item according to Theorem \ref{thm:regularite}, for any $1/r< \delta < \gamma,$
\begin{align}
  \esp{\sup_{s\le t} | \int_0^t K(t,s) v_s \, dB_s|^r} & \le c \esp{
    \lVert \int_0^. K(.,s) v_s \,
    dB_s\rVert_{\I_{\delta,r}}^r}\label{eq:5}\\
  & \le \, c\,  \lVert v \rVert^r_{L^r(\Omega\x [0,1])}.\label{eq:6}
  \end{align}
  \end{enumerate}
The uniqueness is then a consequence of (\ref{eq:4}),~(\ref{eq:6}) and the
Gronwall lemma.
According to (\ref{eq:4}),~(\ref{eq:6}), the Picard sequence defined
by~: 
\begin{equation*}
  X_t^0=x,\ X_t^n = x + \int_0^t K(t,s) b(s,X_s^{n-1})\, ds +\int_0^t
  K(t,s) \sigma(s,X_s^{n-1})\, dB_s
\end{equation*}
is a Cauchy sequence in $L^r(\Omega\x [0,1], \P \ox dt).$ We denote by
$X$ its limit. It is clearly a continuous and adapted solution of
(E). Furthermore, inequalities (\ref{eq:3}) and (\ref{eq:5}) entail
that the convergence also holds in $L^r(\Omega; \I_{\delta,r}),$ so
that the solution has a.s. $(\delta\!-\!1/r)$-H\"older continuous
sample-paths.
\end{proof}
\noindent \textbf{Remark : }
Note that in  the case of the fBm of Hurst index $H<1/2,$ we cannot
work as usual in $L^2$
but only in $L^{1/H},$ hence we have to ensure stronger regularity on
the coefficients, i.e., $b(.,x_0)$ and $\sigma(.,x_0)$ must  belong
to $\L^{1/H}.$

By the same techniques, we can prove that~:
\begin{thm}
  Under the hypothesis of the previous theorem, the map which sends $x$
  to the solution of (E) with initial condition $x$ is continuous from
  $\rr$ in $\L^r(\Omega\x [0,1], \P\ox dt).$ 
\end{thm}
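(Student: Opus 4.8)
The plan is to show that the map is in fact globally Lipschitz from $\rr$ into $L^r(\Omega\x[0,1],\P\ox dt)$, which a fortiori yields continuity. Fix $x,y\in\rr$ and let $X^x$, $X^y$ be the corresponding solutions furnished by Theorem~\ref{existence}; both belong to $L^r(\Omega\x[0,1],\P\ox dt)$, and, by the regularity part of that theorem together with the embedding $\I_{\delta,r}\subset\mathcal B$ (valid for $\delta>1/r$), the function
\[
\phi(t):=\esp{\sup_{u\le t}|X^x_u-X^y_u|^r}
\]
is finite and non-decreasing on $[0,1]$. Subtracting the equations satisfied by $X^x$ and $X^y$ gives, for every $t$,
\[
X^x_t-X^y_t=(x-y)+\int_0^tK(t,s)\bigl(b(s,X^x_s)-b(s,X^y_s)\bigr)\,ds+\int_0^tK(t,s)\bigl(\sigma(s,X^x_s)-\sigma(s,X^y_s)\bigr)\,dB_s.
\]

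Next I would take $L^r(\Omega)$-norms of the supremum over $[0,t]$ and treat the three terms separately. The contribution of $x-y$ is just $|x-y|^r$. To the drift term I apply inequality~(\ref{eq:4}) on the interval $[0,t]$ (legitimate by Hypothesis~(A)) with $u_s=b(s,X^x_s)-b(s,X^y_s)$, and to the stochastic term the maximal inequality~(\ref{eq:1}) with $v_s=\sigma(s,X^x_s)-\sigma(s,X^y_s)$, after fixing some $\delta\in(1/r,\gamma)$. Using the Lipschitz hypotheses, $|b(s,X^x_s)-b(s,X^y_s)|^r\le L^r|X^x_s-X^y_s|^r$ and likewise for $\sigma$, and bounding $|X^x_s-X^y_s|^r\le\sup_{u\le s}|X^x_u-X^y_u|^r$ under the time integral, one arrives at an inequality of the form
\[
\phi(t)\le c\,|x-y|^r+c\int_0^t\phi(s)\,ds,
\]
with $c$ depending only on $L$, $r$ and $\gamma$, and in particular neither on $x$ nor on $y$.

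Since $\phi$ is finite, non-negative and non-decreasing, the Gronwall lemma then gives $\phi(t)\le c\,|x-y|^r e^{ct}$ for all $t\in[0,1]$; in particular
\[
\lVert X^x-X^y\rVert_{L^r(\Omega\x[0,1],\P\ox dt)}^r=\esp{\int_0^1|X^x_s-X^y_s|^r\,ds}\le\phi(1)\le c\,|x-y|^r,
\]
which is the asserted Lipschitz, hence continuous, dependence of the solution on the initial condition.

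There is no genuinely new difficulty here: the computation is simply the parametrised form of the uniqueness part of Theorem~\ref{existence}. The only point deserving a word of care is the a priori finiteness of $\phi(t)$, needed to apply Gronwall's lemma legitimately; this is exactly what the maximal inequality~(\ref{eq:1}) and the continuous embedding of $\I_{\delta,r}$ into the bounded functions provide, so the obstacle is essentially cosmetic.
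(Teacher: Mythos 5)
Your argument is correct and is precisely what the paper has in mind: the theorem is stated there without proof, with the remark that it follows ``by the same techniques'' as Theorem~\ref{existence}, and your Gronwall estimate on $\phi(t)=\esp{\sup_{u\le t}|X^x_u-X^y_u|^r}$, using~(\ref{eq:4}) and~(\ref{eq:6}) together with the Lipschitz hypotheses, is exactly that technique carried out. You even obtain the slightly stronger conclusion that the solution map is globally Lipschitz, and you correctly flag the a priori finiteness of $\phi$ (via the embedding $\I_{\delta,r}\subset\mathcal B$ for $\delta>1/r$) as the only point needing care before Gronwall can be invoked.
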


\section{Gross-Sobolev regularity of $X$}
We are now interested in the Gross-Sobolev differentiability of $X_t.$ 
\begin{lemma}
\label{derivint}
  For $u$ adapted belonging to $\L^2(\Omega\x [0,1]; \D_{2,1}),$ for any $t\in
  [0,1],$ the  distribution  $ \nabla(\int_0^t K(t,s) u_s \, dB_s)$
  exists as a   $L^{2}(\Omega\x [0,t])$ random
  variable and satisfies~:
\begin{equation}
\label{eq:9}
   \lVert \nabla(\int_0^t K(t,s) u_s \,
    ds)\rVert_{L^2(\Omega\x [0,t])}  \le \, c \, \lVert u
    \rVert_{L^2(\Omega\x [0,t])}
\end{equation}
\end{lemma}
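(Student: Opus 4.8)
The plan is to obtain $\nabla M_t(u)$, with $M_t(u)=\int_0^tK(t,s)u_s\,dB_s$, from the commutation rule between the Malliavin derivative and the It\^o integral, and then to estimate it by means of Hypothesis~(A) and Theorem~\ref{thm:regularite}. Since for fixed $t$ the factor $K(t,\cdot)$ is a deterministic multiplier, I would first establish, for an elementary adapted process $u_s=\sum_iF_i\,\mathbf 1_{(t_i,t_{i+1}]}(s)$ with $F_i\in\D_{2,1}$ and $\mathcal F_{t_i}$-measurable, the identity
\begin{equation*}
  \nabla_rM_t(u)=K(t,r)\,u_r\,\mathbf 1_{[0,t]}(r)+\int_0^tK(t,s)\,\nabla_ru_s\,dB_s,\qquad r\in[0,t],
\end{equation*}
using the elementary differentiation rule for It\^o integrals of smooth integrands together with $\nabla_r\bigl(K(t,s)u_s\bigr)=K(t,s)\nabla_ru_s$. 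Observe that, $u$ being adapted, $s\mapsto\nabla_ru_s$ vanishes on $[0,r]$ and is adapted on $[r,t]$, so the second term is a genuine It\^o integral, namely $M_t$ applied to the adapted process $s\mapsto\nabla_ru_s$.

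Next I would prove~(\ref{eq:9}) for such $u$, bounding the two terms separately. For the stochastic term: for each $r$, $s\mapsto\nabla_ru_s$ is adapted, so the $L^2$-boundedness of $M_t$ furnished by Theorem~\ref{thm:regularite} gives $\lVert\int_0^tK(t,s)\nabla_ru_s\,dB_s\rVert_{L^2(\Omega)}\le c\,(\esp{\int_0^t|\nabla_ru_s|^2\,ds})^{1/2}$, and integrating over $r\in[0,t]$ controls its $L^2(\Omega\x[0,t])$ norm by $c\,\lVert\nabla u\rVert_{L^2(\Omega\x[0,t]^2)}$. The boundary term $r\mapsto K(t,r)u_r$ is the delicate one: the kernel row $K(t,\cdot)$ need not be bounded, so it cannot be estimated pointwise and one must bring in the regularizing operator $K$. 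I would argue by duality: paired against a simple adapted test process $h$, $\int_0^tK(t,r)u_rh_r\,dr=(K(uh))(t)$, which, $K$ mapping $\L^1([0,t])$ into a space of bounded functions by Hypothesis~(A), is at most $c\,\lVert uh\rVert_{\L^1([0,t])}\le c\,\lVert u\rVert_{L^2}\lVert h\rVert_{L^2}$; for $h$ non-adapted the same computation produces a trace term $\esp{\int_0^tK(t,s)\int_0^t\nabla_ru_s\,\nabla_sh_r\,dr\,ds}$, kept under control by applying the same device to $s\mapsto\int_0^t\nabla_ru_s\nabla_sh_r\,dr$. This duality bound is what underlies the assertion that the distribution $\nabla M_t(u)$ is in fact an $L^2(\Omega\x[0,t])$ random variable, and it yields~(\ref{eq:9}) with a constant depending only on the operator norms in Hypothesis~(A).

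Finally I would remove the smoothness assumption on $u$ by approximation: take elementary adapted $u^n\to u$ in $L^2(\Omega\x[0,1];\D_{2,1})$; by Theorem~\ref{thm:regularite}, $M_t(u^n)\to M_t(u)$ in $L^2(\Omega)$, while the bound just proved makes $(\nabla M_t(u^n))_n$ Cauchy in $L^2(\Omega\x[0,t])$, so closedness of $\nabla$ gives $M_t(u)\in\D_{2,1}$ and~(\ref{eq:9}). The crux --- and the one place where the singularity of the kernel is felt --- is the non-martingale boundary term $K(t,r)u_r$: the key idea, in keeping with the paper's philosophy, is to refrain from treating $K(t,s)$ pointwise and instead convert the pairing $\int_0^tK(t,r)u_rh_r\,dr$ into an application of the smoothing map $K$ to an $\L^1$ function. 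A subsidiary technical point, which is why one works first with elementary $u$ and only afterwards passes to the limit, is the rigorous justification of the commutation identity and of the (stochastic) Fubini steps for a kernel without any boundedness.
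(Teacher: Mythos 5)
There is a genuine gap, and it sits exactly where you yourself locate the crux. To control the boundary term you pair $K(t,\cdot)u$ against a test process $h$, write $\int_0^t K(t,r)u_rh_r\,dr=(K(uh))(t)$, and invoke ``$K$ mapping $\L^1([0,t])$ into a space of bounded functions by Hypothesis~(A).'' Hypothesis~(A) does not say this: it sends $\L^1$ into $\I_{\gamma,2}$, and $\I_{\gamma,2}$ embeds into $\mathcal B$ (indeed, admits continuous point evaluations at all) only when $\gamma>1/2$. It is $\L^2$ that $K$ maps into $\I_{\gamma+1/2,2}\subset\Hol_\gamma\subset\mathcal B$. In the singular regime the paper is built for (the fBm kernel with $H\le 1/2$, where $\gamma<H\le 1/2$), the quantity $(K(uh))(t)$ is not even defined pointwise, so the estimate $c\,\lVert u\rVert_{L^2}\lVert h\rVert_{L^2}$ collapses. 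The paper's own device for this very term is different: it bounds $\esp{|\int_0^tK(t,s)u_s\xi_s\,ds|}$ by $\lVert u\rVert_\infty\,\lVert K^*(\epsilon_t)\rVert_{\L^2}\lVert\xi\rVert_{L^2}$, using that the Dirac mass $\epsilon_t$ lies in the dual of $\I_{\gamma+1/2,2}$ (true because $\gamma+1/2>1/2$), whence $\lVert K(t,\cdot)\rVert_{\L^2}\le ct^\gamma$. This is precisely why the stochastic-integral version of the lemma (Lemma~\ref{derivintsto}) carries the extra hypothesis that $u$ be \emph{bounded}; your argument silently dispenses with that hypothesis, and cannot. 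A related overclaim occurs in your treatment of the other term: Theorem~\ref{thm:regularite} controls the trajectory norm $\lVert M_\cdot(v)\rVert_{L^2(\Omega;\I_{\delta,2}([0,t]))}$, not the endpoint value $M_t(v)$ at a fixed $t$; by the It\^o isometry $\esp{|M_t(v)|^2}=\esp{\int_0^tK(t,s)^2v_s^2\,ds}$, which an unbounded row $K(t,\cdot)$ prevents you from dominating by $c\,\esp{\int_0^t v_s^2\,ds}$.

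A second, more clerical point: the displayed inequality \eqref{eq:9} concerns the Lebesgue integral $\int_0^tK(t,s)u_s\,ds$, and the contrast with Lemma~\ref{derivintsto} makes clear that Lemma~\ref{derivint} is meant to be about that drift term (the ``$dB_s$'' in its prose is a misprint). For the drift term there is no boundary contribution at all: $\nabla_\tau\int_0^tK(t,s)u_s\,ds=\int_0^tK(t,s)\nabla_\tau u_s\,ds$, and the bound follows directly from the $\L^2\to\I_{\gamma+1/2,2}\subset\mathcal B$ half of Hypothesis~(A) applied to $s\mapsto\nabla_\tau u_s$ --- this is why the paper dismisses it as ``simpler'' and proves only the stochastic case. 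You have instead attempted the stochastic case, and your overall architecture for it (commutation identity for elementary processes, separate estimates, closedness of $\nabla$ to pass to the limit) is reasonable and more constructive than the paper's purely dual argument; but without the boundedness of $u$, and without a correct substitute for the $\L^1\to\mathcal B$ step, the boundary estimate does not go through.
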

\begin{lemma}
\label{derivintsto}
  For $u$ bounded, adapted belonging to $\L^2(\Omega\x [0,1]; \D_{2,1}),$ for any $t\in
  [0,1],$ the  distribution  $  \nabla(\int_0^t K(t,s)u_s \, dB_s )$ exists as
  a $L^{2}(\Omega\x [0,t])$ random
  variable and satisfies~:
\begin{equation}
\label{eq:2}
 \lVert \nabla(\int_0^t K(t,s) u_s \,
  dB_s)\rVert_{L^2(\Omega\x [0,t])}  \le \, c \, \lVert  u
  \rVert_{L^2(\Omega\x [0,t]; \D_{2,1})}.
\end{equation}
\end{lemma}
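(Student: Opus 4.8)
The plan is to prove (\ref{eq:2}) first for simple integrands, where the Leibniz rule for $\nabla$ applies directly, and then to extend it by the closability of $\nabla$. The single ingredient I would extract from hypothesis (A) is that, as in the proof of Theorem \ref{thm:regularite}, $K$ is continuous from $\L^2([0,t])$ into ${\mathcal B}$; consequently $g\mapsto\int_0^tK(t,s)g(s)\,ds$ is a bounded functional on $\L^2([0,t])$, so by Riesz representation $s\mapsto K(t,s)$ belongs to $\L^2([0,t])$, with norm locally bounded in $t$. In particular $M_t(u)=\int_0^tK(t,s)u_s\,dB_s$ is a genuine element of $L^2(\Omega)$ for $u$ bounded and adapted, and -- this is exactly what the boundedness hypothesis is for -- the diagonal function $r\mapsto K(t,r)u_r$ is square integrable on $\Omega\x[0,t]$.

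I would then approximate $u$ in $L^2(\Omega\x[0,1];\D_{2,1})$ by simple adapted processes $u^n_s=\sum_jF^n_j\,\mathbf{1}_{(t^n_j,t^n_{j+1}]}(s)$, with $F^n_j\in\D_{2,1}$ and $\mathcal F_{t^n_j}$-measurable, chosen by the usual time-averaging construction so that $\sup_n\|u^n\|_\infty\le\|u\|_\infty$. For such $u^n$, $M_t(u^n)=\sum_jF^n_j\int_{t^n_j}^{t^n_{j+1}}K(t,s)\,dB_s$ is a finite sum of products of elements of $\D_{2,1}$ -- the Wiener integrals lie in $\D_{2,1}$ precisely because $K(t,\cdot)\in\L^2$ -- and the product rule gives, for $r\le t$,
\[
 \nabla_r M_t(u^n)=K(t,r)\,u^n_r+\int_0^tK(t,s)\,\nabla_r u^n_s\,dB_s,
\]
the last integral being a legitimate Itô integral since, $F^n_j$ being $\mathcal F_{t^n_j}$-measurable, $s\mapsto\nabla_r u^n_s$ is adapted and vanishes for $s<r$. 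One could also deduce this from the commutation relation $\nabla_r\delta(\psi)=\psi_r+\delta(\nabla_r\psi)$ for the divergence operator applied to $\psi_s=K(t,s)u_s$, but the verification of the domain hypotheses then reduces to the same estimates.

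To bound $\nabla M_t(u^n)$ in $L^2(\Omega\x[0,t])$ I would estimate the two terms separately. For the diagonal term, boundedness of $u^n$ and $K(t,\cdot)\in\L^2([0,t])$ give $\esp{\int_0^tK(t,r)^2(u^n_r)^2\,dr}\le\|u^n\|_\infty^2\,\|K(t,\cdot)\|_{\L^2([0,t])}^2$. For the stochastic term, Itô's isometry applied for each fixed $r$, followed by Fubini, reduce it to $\esp{\int_0^tK(t,s)^2\,\|\nabla u^n_s\|_{\L^2([0,t])}^2\,ds}$, which is again controlled by $\|K(t,\cdot)\|_{\L^2([0,t])}^2$ times a supremum-in-$s$ norm of $\nabla u^n$. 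Both bounds are uniform in $n$, and -- the fixed factor $\|u^n\|_\infty$ being absorbed into $c$ -- are dominated by $\|u^n\|_{L^2(\Omega\x[0,t];\D_{2,1})}^2$.

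Finally, running the same estimate on $u^n-u^m$ shows that $(\nabla M_t(u^n))_n$ is Cauchy in $L^2(\Omega\x[0,t])$, while $M_t(u^n)\to M_t(u)$ in $L^2(\Omega)$ by Itô's isometry and dominated convergence (using the uniform bound on the $u^n$ and $K(t,\cdot)^2\in\L^1$); closability of $\nabla$ then yields $M_t(u)\in\D_{2,1}$ with $\nabla M_t(u)=\lim_n\nabla M_t(u^n)$, and (\ref{eq:2}) follows on passing to the limit. I expect the real obstacle to be exactly what the boundedness hypothesis neutralises: the derivative of the stochastic integral inevitably carries the diagonal term $K(t,r)u_r$, which is not square integrable for general $u$ because $s\mapsto K(t,s)$ is singular at $s=t$; and keeping the matching singular weight $K(t,s)^2$ under control against $\|\nabla u_s\|_{\L^2}$ in the stochastic term is where hypothesis (A) -- through $K(t,\cdot)\in\L^2([0,t])$ -- is really used, together with the regularity in $s$ of the integrand that will be available when the lemma is applied with $u_s=\sigma(s,X_s)$.
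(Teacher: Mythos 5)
Your overall architecture --- Leibniz rule on simple adapted processes, then closability of $\nabla$ --- is a legitimate alternative organization to the paper's argument, which instead works by duality: the authors pair $\int_0^t K(t,s)u_s\,dB_s$ with $\delta(\xi)$ for $\xi\in\D_\infty(\L^2)$, integrate by parts to produce exactly the same two terms you find, and conclude via Proposition 3 of Üstünel's book rather than via closability. Your treatment of the diagonal term $K(t,\cdot)u_\cdot$ coincides with theirs: your Riesz-representation observation that $K(t,\cdot)\in\L^2([0,t])$ is their inequality (\ref{eq:10}), $\lVert K^*(\epsilon_t)\rVert_{\L^2}\le c\,t^\gamma$, and the boundedness of $u$ absorbs it in the same way.

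The gap is in the stochastic term. After Itô's isometry and Fubini you arrive at $\esp{\int_0^t K(t,s)^2\lVert\nabla u_s\rVert_{\L^2}^2\,ds}$ and propose to bound it by $\lVert K(t,\cdot)\rVert_{\L^2}^2$ times a \emph{supremum-in-$s$} norm of $\nabla u$. That is not the inequality (\ref{eq:2}) you are asked to prove: its right-hand side is the $L^2(\Omega\x[0,t];\D_{2,1})$ norm, i.e.\ $\int_0^t\esp{\lVert\nabla u_s\rVert^2}ds$, which does not dominate $\sup_s\esp{\lVert\nabla u_s\rVert^2}$; no such sup is assumed for $u$, nor is it preserved (it is in general degraded) by the time-averaging approximation $u^n$; and the $L^2$-in-time form of the bound is precisely what the Picard/Gronwall iteration in Theorem \ref{derivabilitedeX} consumes. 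Pointwise-in-$t$ Itô isometry cannot deliver it, because $K(t,s)^2$ is singular at $s=t$ (for the fBm with $H<1/2$ one has $K(t,s)^2\sim(t-s)^{2H-1}$), so $\esp{\int_0^t K(t,s)^2|v_s|^2ds}$ is not bounded by $c\,\esp{\int_0^t|v_s|^2ds}$. The paper's way out is Theorem \ref{thm:regularite} with $r=2$, i.e.\ inequality (\ref{eq:8}): the maximal inequality, which rests on the Hilbert--Schmidt property of $K$ on $\L^2$ supplied by hypothesis (A), controls the stochastic integral in an averaged sense and produces the bound $c\,\esp{\int_0^t\int_0^s|\nabla_\tau u_s|^2\,d\tau\,ds}^{1/2}$ with no singular weight left over. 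You correctly flag this term as ``where hypothesis (A) is really used,'' but the only consequence of (A) you actually extract, $K(t,\cdot)\in\L^2([0,t])$, is not sufficient for it; the missing ingredient is the maximal inequality (\ref{eq:1}).
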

\begin{proof}
  We only prove (\ref{eq:2}) since inequality (\ref{eq:9}) is simpler to
  show and its proof proceeds along the same lines. According to (A), for any
  $t,$ the random variable $\int_0^t K(t,s) u_s \, dB_s$ belongs to
  $L^r(\Omega, \P)$ and thus has a derivative in the distributional sense.  For
  any $\xi \in \D_\infty(\L^2),$
  \begin{multline*}
    \langle \nabla(\int_0^t K(t,s) u_s \, dB_s), \, \xi \rangle_{
      \D_{-\infty}, \D_\infty} = \esp{\int_0^t K(t,s) u_s \, dB_s\, 
      \delta(\xi)}\\
  \begin{aligned}
    = & \esp{\int_0^t K(t,s) u_s \, \xi_s \, ds} +\esp{\int_0^t K(t,s)
      \Bigl(\int_0^t \nabla_r u_s \, \xi_r \, dr\Bigr)\, dB_s}.
  \end{aligned}
  \end{multline*}
Hypothesis (A) induces that for any $\xi\in
\D_\infty(\L^{2}),$ we have~:
\begin{align*}
\esp{ |\int_0^t K(t,s) u_s \, \xi_s \, ds| }& \le \, c\,  \lVert K(t,.)u
  \rVert_{L^2(\Omega \x [0,1])} \lVert \xi \rVert_{L^2(\Omega \x [0,1])}\\
& \le \, c\, \lVert u \rVert_\infty \lVert K^*(\epsilon_t) \rVert_{\L^2} \lVert \xi \rVert_{L^2(\Omega \x [0,1])}.
\end{align*}
Since $\epsilon_t$ belongs to the dual of $\I_{\gamma+1/2,2},$
according to (A), we have~:
\begin{equation}
\label{eq:10}
   \lVert K^*(\epsilon_t) \rVert_{\L^2}\le \, c\, \lVert \epsilon_t
   \rVert_{\I_{\gamma+1/2,2}^*}=c t^\gamma.
\end{equation}
According to theorem \ref{thm:regularite} for $r=2$, since
$\I_{\gamma-\epsilon,2}\subset \L^2,$ for any $\xi\in
\D_\infty(\L^{2}),$ 
\begin{multline}\label{eq:8}
  \esp{|\int_0^t \xi_\tau \int_0^t K(t,s)
       \nabla_\tau u_s  \, dB_s\, d\tau|}  \\
 \le \, c \, \lVert \xi \rVert_{L^{2}(\Omega\x
       [0,1])}\esp{\int_0^t\int_0^s |\nabla_\tau  u_s|^2 \, d\tau \, ds}^{1/2} .
\end{multline}
The result is then a consequence of (\ref{eq:10}) and
(\ref{eq:8}) and Proposition 3 of \cite[page 37]{ustunel_book}.
\end{proof}
\noindent \textbf{ Remark : } If $\gamma >1/2,$ inequalities
(\ref{eq:9}) and (\ref{eq:2}) are true uniformly with respect to $t,$
i.e., for instance,
\begin{equation*}
 \lVert\sup_{t\le T} \nabla(\int_0^t K(t,s) u_s \,
  dB_s)\rVert_{L^2(\Omega\x [0,T])}  \le \, c \, \lVert  u
  \rVert_{L^2(\Omega\x [0,T]; \D_{2,1})}.
\end{equation*}

\begin{thm}
\label{derivabilitedeX}
The hypothesis of Theorem \ref{existence} are assumed to hold.
Moreover, $b$ and $\sigma$ are supposed to be once continuously
differentiable with respect to their space variable, with bounded
derivative; assume furthermore that $\sigma$ is bounded.  For any
$t\in I,$ the value at $t$ of the solution of Eqn.  \eqref{eq:sdefbm},
denoted by $X_t,$ belongs to $\D_{2,1}.$ For any $\xi\in
\H,$
\begin{multline}
\label{eq:11}
  < X_t,\, \xi>_\H =\int_0^t
  K(t,s)\sigma(X_s)\xi_s\, ds \\
  \begin{aligned}
  + &\int_0^t
  K(t,u) \dfrac{\partial b}{\partial x}(u,X_u) <\n X_u,\, \xi>_\H\, du \\
  + & \int_0^t K(t,u) \dfrac{\partial \sigma}{\partial x}(u,X_u)
  <\n X_u,\, \xi>_\H \, dB_u.   
  \end{aligned}
 \end{multline}
Moreover, for $\xi\in L^r(\Omega \x [0,1]),$ $(<\n X_t,\, \xi>_\H,\, t\in [0,1])$ belongs to
$L^r(\Omega\x [0,1]).$
\end{thm}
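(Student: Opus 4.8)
The plan is to differentiate, term by term, the Picard scheme used in Theorem~\ref{existence}, to turn Lemmas~\ref{derivint}--\ref{derivintsto} into a bound on the Gross--Sobolev norms of the iterates that is uniform in the iteration index, and then to invoke the closedness of $\n$ in order to conclude that $X_t\in\D_{2,1}$ and to reach \eqref{eq:11}. Let $(X^n)$ be the Picard iterates. Since $X^0\equiv x$, one has $X^0_t\in\D_{2,1}$ with $\n X^0_t=0$. Assume, by induction, that for every $t$ one has $X^{n-1}_t\in\D_{2,1}$ and $\esp{\int_0^1\lVert\n X^{n-1}_s\rVert^2_{\L^2}\,ds}<\infty$. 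As $b$ and $\sigma$ are $C^1$ in space with bounded derivatives and $\sigma$ is bounded, the processes $b(\cdot,X^{n-1}_\cdot)$ and $\sigma(\cdot,X^{n-1}_\cdot)$ are adapted, belong to $\L^2(\Omega\x[0,1];\D_{2,1})$ with $\n_\tau b(s,X^{n-1}_s)=\tfrac{\partial b}{\partial x}(s,X^{n-1}_s)\,\n_\tau X^{n-1}_s$ (and similarly for $\sigma$), and $\sigma(\cdot,X^{n-1}_\cdot)$ is bounded. Applying $\n$ to the recursion, the Lebesgue integral being handled by Lemma~\ref{derivint} and the It\^o integral by Lemma~\ref{derivintsto} (licit precisely because $\sigma$ is bounded), we obtain $X^n_t\in\D_{2,1}$ and
\begin{multline*}
 \n_\tau X^n_t = K(t,\tau)\,\sigma(\tau,X^{n-1}_\tau)\,1_{[0,t]}(\tau)
 + \int_0^t K(t,s)\,\dfrac{\partial b}{\partial x}(s,X^{n-1}_s)\,\n_\tau X^{n-1}_s\,ds\\
 {}+ \int_0^t K(t,s)\,\dfrac{\partial \sigma}{\partial x}(s,X^{n-1}_s)\,\n_\tau X^{n-1}_s\,dB_s .
\end{multline*}

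Set $h_n(t)=\esp{\int_0^t\lVert\n X^n_s\rVert^2_{\L^2}\,ds}$ and estimate the three terms above in $\L^2(\Omega\x[0,1])$, treating $\tau$ as a parameter: the free term contributes at most $\lVert\sigma\rVert_\infty^2\lVert K^*(\epsilon_s)\rVert^2_{\L^2}\le c\lVert\sigma\rVert_\infty^2 s^{2\gamma}$ by \eqref{eq:10}; the drift term, through the continuity of $K$ from $\L^2$ into ${\mathcal B}$ (as in the proof of Theorem~\ref{thm:regularite}) and the boundedness of $\partial b/\partial x$, contributes at most $c\int_0^t h_{n-1}(s)\,ds$; the stochastic term, through the maximal inequality \eqref{eq:1} with $r=2$, the embedding $\I_{\gamma-\epsilon,2}([0,t])\hookrightarrow\L^2([0,t])$ (whose norm is $O(t^{\gamma-\epsilon})$) and the boundedness of $\partial\sigma/\partial x$, contributes at most $c\,t^{2(\gamma-\epsilon)}h_{n-1}(t)$. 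One is thus led to $h_n(t)\le c_1+c_2\int_0^t h_{n-1}(s)\,ds+c_3\,t^{2(\gamma-\epsilon)}h_{n-1}(t)$. Here the singularity of the kernel precludes a blunt use of Gronwall's lemma: on an interval $[0,T_0]$ short enough that $c_3T_0^{2(\gamma-\epsilon)}<1/2$, iterating this inequality in $n$ (recall $h_0\equiv0$) makes the factors $c_3t^{2(\gamma-\epsilon)}$ telescope into a convergent geometric series, and Gronwall's lemma applied to $\sup_{m\le N}h_m$ then yields $\sup_n\sup_{t\le T_0}h_n(t)<\infty$; as $T_0$ does not depend on the origin, iterating over consecutive intervals gives $\sup_n\sup_{t\le1}h_n(t)<\infty$. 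In particular the induction hypothesis is propagated.

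Now fix $t$. By Theorem~\ref{existence}, $X^n_t\to X_t$ in $\L^2(\Omega)$, while $(\n X^n_t)$ is bounded in $\L^2(\Omega\x[0,1])$ by the previous step; since $\n$ is a closed operator on $\D_{2,1}$, it follows that $X_t\in\D_{2,1}$, that $\n X^n_t\rightharpoonup\n X_t$ weakly in $\L^2(\Omega\x[0,1])$, and, by Fatou's lemma, that $\esp{\int_0^1\lVert\n X_s\rVert^2_{\L^2}\,ds}<\infty$. Consequently $b(\cdot,X_\cdot)$ and $\sigma(\cdot,X_\cdot)$ belong to $\L^2(\Omega\x[0,1];\D_{2,1})$, the latter being bounded, so that $\n$ may be applied directly to \eqref{eq:sdefbm} by Lemmas~\ref{derivint}--\ref{derivintsto} and the chain rule; pairing the resulting identity with $\xi\in\H$ — and moving the pairing across the It\^o integral by a stochastic Fubini theorem — gives \eqref{eq:11}.

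Finally, for $\xi$ fixed the process $Z_t=\langle\n X_t,\xi\rangle_\H$ is, by \eqref{eq:11}, a solution of the linear Volterra equation
\[
 Z_t = g_t + \int_0^t K(t,u)\,\dfrac{\partial b}{\partial x}(u,X_u)\,Z_u\,du + \int_0^t K(t,u)\,\dfrac{\partial\sigma}{\partial x}(u,X_u)\,Z_u\,dB_u, \quad g_t = \int_0^t K(t,u)\,\sigma(X_u)\,\xi_u\,du .
\]
Its coefficients $z\mapsto\tfrac{\partial b}{\partial x}(u,X_u)\,z$ and $z\mapsto\tfrac{\partial\sigma}{\partial x}(u,X_u)\,z$ are uniformly Lipschitz in $z$, and $g\in\L^r(\Omega\x[0,1])$ because $\sigma$ is bounded, $\xi\in\L^r$ and $\esp{\sup_{s\le t}|g_s|^r}\le c\lVert\sigma\rVert_\infty^r\esp{\int_0^t|\xi_u|^r\,du}$ by \eqref{eq:4}. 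The fixed-point argument of Theorem~\ref{existence}, resting on \eqref{eq:1}, \eqref{eq:4} and Gronwall's lemma, then applies without change and produces a solution of this equation in $\L^r(\Omega\x[0,1])$; by the uniqueness statement of Theorem~\ref{existence}, already valid in $\L^2$, it coincides with $Z$, so $(\langle\n X_t,\xi\rangle_\H,\,t\in[0,1])$ lies in $\L^r(\Omega\x[0,1])$. The main obstacle is the uniform a~priori estimate: the singular kernel rules out a direct Gronwall argument and forces one to extract the extra power of $t$ furnished by the embedding $\I_{\gamma-\epsilon,2}([0,t])\hookrightarrow\L^2([0,t])$ and to localize in time; one must also keep careful track of the free term $K(t,\tau)\sigma(\tau,X^{n-1}_\tau)$ created by the Malliavin derivative of the stochastic integral, and of the boundedness hypothesis on $\sigma$ without which Lemma~\ref{derivintsto} would be unavailable.
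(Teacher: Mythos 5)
Your overall architecture is the paper's: differentiate the Picard iterates, obtain a bound on $\lVert X^n_t\rVert_{\D_{2,1}}$ uniform in $n$, use closability of $\n$ to get $X_t\in\D_{2,1}$, pass to the limit in the differentiated recursion, and finish the $L^r$ claim with Gronwall. The divergence — and the source of two genuine gaps — is in the a priori estimate. You split $\n_\tau X^n_t$ into three pieces and estimate each one by hand, which forces you to work with the time-integrated quantity $h_n(t)=\esp{\int_0^t\lVert\n X^n_s\rVert^2_{\L^2}\,ds}$ (you need the embedding $\I_{\gamma-\epsilon,2}([0,t])\hookrightarrow\L^2([0,t])$ because, for $\gamma\le 1/2$, Theorem \ref{thm:regularite} gives no pointwise-in-$t$ control of the stochastic integral) and produces the non-convolution term $c_3t^{2(\gamma-\epsilon)}h_{n-1}(t)$. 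First gap: your continuation past $T_0$ does not work. The factor $c_3t^{2(\gamma-\epsilon)}$ is small only because $t$ is close to the origin; on $[kT_0,(k+1)T_0]$ it is of order $c_3$, and since the equation is of Volterra type you cannot restart the clock at $kT_0$ — the iterates on $[kT_0,(k+1)T_0]$ involve the whole history through $\int_0^t K(t,s)\cdots$, and the geometric series you need has ratio $c_3$, not $1/2$. Second gap: even granting $\sup_n h_n(1)<\infty$, this only bounds $\n X^n_\cdot$ in $L^2(\Omega\x[0,1]^2)$, so the closability/weak-compactness argument yields $X_t\in\D_{2,1}$ for almost every $t$, while the theorem asserts it for every $t$.

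Both problems disappear if you use Lemmas \ref{derivint} and \ref{derivintsto} as black boxes, which is what the paper does: applied with $u_s=b(s,X^{n-1}_s)$ and $u_s=\sigma(s,X^{n-1}_s)$, they bound the \emph{entire} Malliavin derivative of each integral — trace term $K(t,\tau)\sigma(\tau,X^{n-1}_\tau)$ included — pointwise in $t$ by $c\lVert u\rVert_{L^2(\Omega\x[0,t];\D_{2,1})}$, and the chain rule gives $\lVert u_s\rVert_{\D_{2,1}}\le c(1+\lVert X^{n-1}_s\rVert_{\D_{2,1}})$. One lands directly on the convolution inequality $\lVert X^n_t\rVert^2_{\D_{2,1}}\le c\int_0^t\lVert X^{n-1}_s\rVert^2_{\D_{2,1}}\,ds$, whose iteration gives the factorial bound $(ct)^n/n!$ and hence $\sup_n\lVert X^n_t\rVert_{\D_{2,1}}<\infty$ for each fixed $t$ — no localization, no loss of the pointwise conclusion. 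A lesser remark on your last step: Theorem \ref{existence} contains no $\L^2$ uniqueness statement for the linear equation satisfied by $Z_t=\langle\n X_t,\xi\rangle_\H$ (its uniqueness is for the nonlinear equation, in $L^r$ with $r>\max(2,\gamma^{-1})$); the paper instead applies the estimates \eqref{eq:4} and \eqref{eq:6} directly to identity \eqref{eq:11} to get $\esp{\sup_{s\le t}|\langle\n X_s,\xi\rangle_\H|^r}\le c\bigl(\esp{\int_0^1|\xi_s|^r ds}+\esp{\int_0^t\sup_{u\le s}|\langle\n X_u,\xi\rangle_\H|^r ds}\bigr)$ and concludes by Gronwall; uniqueness of the linear equation is deferred to Theorem \ref{thmequationdenabla}.
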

\begin{proof}
  Let $X^n$ be the Picard sequence already defined in the proof of
  theorem \ref{existence}. Using lemmas \ref{derivint} and
  \ref{derivintsto}, we prove by induction on $n$ that $X^n$ belongs
  to $\D_{2,1}$ and that~:
  \begin{equation*}
    \lVert X_t^n  \rVert_{\D_{2,1}}^2 \le \, c\, \int_0^t \lVert
    X_s^{n-1}  \rVert_{\D_{2,1}}^2 \, ds \le \frac{(ct)^n}{n!}x^2. 
  \end{equation*}
It follows that $\sup_n \lVert X_t^n
\rVert_{\D_{2,1}}$ is finite and thus that there exists a weakly
convergent subsequence in $\D_{2,1}.$  Since $X^n_t$ converges to
$X_t$ in $L^2(\Omega),$ the closability of $\n$ entails that $X_t$
belongs to $\D_{2,1}.$ Since for any $n,$ 
\begin{multline*}
  < X_t^n,\, \xi>_\H =\int_0^t
  K(t,s)\sigma(X_s^{n-1})\xi_s\, ds \\
  \begin{aligned}
  + &\int_0^t
  K(t,u) \dfrac{\partial b}{\partial x}(u,X_u^{n-1}) <\n X_u^{n-1},\, \xi>_\H\, du \\
  + & \int_0^t K(t,u) \dfrac{\partial \sigma}{\partial x}(u,X_u^{n-1})
  <\n X_u^{n-1},\, \xi>_\H \, dB_u,   
  \end{aligned}
 \end{multline*}
a straightforward application of the dominated convergence theorem
yields to (\ref{eq:11}).
Since $\L^r$ is continuously imbedded in $\L^2$ and
$\I_{\gamma+1/2,2}$ is continuously embedded in ${\mathcal
  C}_0([0,1];\rr),$ we have~:
\begin{multline*}
  \esp{\sup_{s\le t }| <\n X_s ,\, \xi>_\H|^r}  \le c\Bigl( \esp{\int_0^1 |\xi_s|^r\,
  ds}+ \esp{\int_0^t |<\n X_s,
  \xi>_\H|^r \, ds}\Bigr)\\
\le  c\Bigl( \esp{\int_0^1 |\xi_s|^r\,
  ds}+ \esp{\int_0^t \sup_{u\le s}|<\n X_u,
  \xi>_\H|^r \, ds}\Bigr).
\end{multline*}
By Gronwall lemma, it follows that $ <\n X_. ,\, \xi>_\H$ belongs to
$L^r(\Omega\x [0,1]).$ 
\end{proof}
\begin{thm}
\label{thmequationdenabla}
Assume that the hypothesis of Theorem \ref{derivabilitedeX} hold.  For any
$\xi\in L^r(\Omega\x [0,1]),$ the equation
  \begin{multline}
    Y_t=<K(t,.)\sigma\circ X, \, \xi>_\H  +\int_0^t K(t,u) \frac{\partial b}{\partial x}(u,X_u) Y_u\, du \\
    + \int_0^t K(t,u)\frac{\partial \sigma}{\partial x}(u,X_u) Y_u \,
    dB_u
\label{equationdenabla}
  \end{multline}
  has one and only one solution belonging to $L^r(\Omega\x [0,1]).$ 
\end{thm}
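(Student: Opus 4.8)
The plan is to read (\ref{equationdenabla}) as a linear Volterra equation of exactly the same type as (E). Its drift and diffusion coefficients are $a(u,\omega)=\frac{\partial b}{\partial x}(u,X_u(\omega))$ and $c(u,\omega)=\frac{\partial\sigma}{\partial x}(u,X_u(\omega))$, which are progressively measurable and, by the standing hypothesis of Theorem~\ref{derivabilitedeX}, bounded by $L$; its free term is
$Z_t=\langle K(t,\cdot)\,\sigma\circ X,\xi\rangle_\H=\int_0^t K(t,s)\sigma(X_s)\xi_s\,ds=\bigl(K(\sigma(X_\cdot)\xi_\cdot)\bigr)(t)$,
the last expressions agreeing with the first term of (\ref{eq:11}). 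Thus the fixed-point argument of Theorem~\ref{existence} applies almost verbatim, the only genuinely new point being the integrability of the free term.

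First I would check that $Z\in L^r(\Omega\x [0,1])$. Since $\sigma$ is bounded, $|\sigma(X_s)\xi_s|\le\lVert\sigma\rVert_\infty|\xi_s|$, hence $\sigma(X_\cdot)\xi_\cdot\in L^r(\Omega\x [0,1])$, and inequality (\ref{eq:4}) from the proof of Theorem~\ref{existence} gives, for any $1/r<\delta<\gamma$,
$\esp{\sup_{s\le t}|Z_s|^r}\le\esp{\lVert Z\rVert_{\I_{\delta,r}}^r}\le c\,\lVert\sigma\rVert_\infty^r\,\esp{\int_0^t|\xi_s|^r\,ds}<\infty$,
so in fact $Z\in L^r(\Omega;\I_{\delta,r})$.

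Existence then follows from the Picard scheme $Y^0=Z$, $Y^{n+1}_t=Z_t+\int_0^tK(t,u)a(u)Y^n_u\,du+\int_0^tK(t,u)c(u)Y^n_u\,dB_u$. Applying (\ref{eq:4}) and (\ref{eq:6}) to the two integral terms and using $|a|,|c|\le L$, one obtains, with $\phi_n(t)=\esp{\sup_{s\le t}|Y^n_s-Y^{n-1}_s|^r}$, the recursion $\phi_{n+1}(t)\le c\int_0^t\phi_n(s)\,ds$ together with the bound $\phi_1(t)\le c\,\esp{\int_0^t|Z_u|^r\,du}\le c\,\lVert Z\rVert_{L^r(\Omega\x [0,1])}^r$; hence $\phi_{n+1}(t)\le\frac{(ct)^n}{n!}\,c\,\lVert Z\rVert_{L^r(\Omega\x [0,1])}^r$, and $(Y^n)$ is a Cauchy sequence in $L^r(\Omega;{\mathcal C}_0([0,1];\rr))$. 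Its limit $Y\in L^r(\Omega\x [0,1])$ is continuous and, passing to the limit in the scheme by dominated convergence (again through (\ref{eq:4}) and~(\ref{eq:6})), solves (\ref{equationdenabla}); for $\xi\in\H$ this is consistent with Theorem~\ref{derivabilitedeX}, which already exhibits $Y_t=\langle\n X_t,\xi\rangle_\H$ as such a solution.

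For uniqueness, let $Y$ and $Y'$ be two solutions in $L^r(\Omega\x [0,1])$ and set $D=Y-Y'$, which satisfies the homogeneous equation $D_t=\int_0^tK(t,u)a(u)D_u\,du+\int_0^tK(t,u)c(u)D_u\,dB_u$. Applying (\ref{eq:4}) and (\ref{eq:6}) to $u=aD$ and $v=cD$ shows first that $\esp{\sup_{s\le t}|D_s|^r}<\infty$ for every $t$, and then that $\esp{\sup_{s\le t}|D_s|^r}\le c\int_0^t\esp{\sup_{u\le s}|D_u|^r}\,ds$; Gronwall's lemma then forces $D\equiv0$. The only point requiring a little care throughout is that the coefficients $a,c$ are now random rather than deterministic; but since they are progressively measurable and uniformly bounded, the estimates used in the proofs of Theorems~\ref{thm:regularite} and~\ref{existence} carry over unchanged, so I do not expect a real obstacle here.
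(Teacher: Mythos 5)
Your proof is correct, and the uniqueness half is essentially the paper's argument: take the difference of two solutions, apply the $L^r$ estimates for the two Volterra integral operators (hypothesis (A) together with Theorem \ref{thm:regularite}), and conclude by Gronwall/iteration — your extra remark that one must first check $\esp{\sup_{s\le t}|D_s|^r}<\infty$ before invoking Gronwall is a point the paper handles implicitly by iterating the inequality instead. Where you genuinely diverge is on existence: the paper disposes of it in one line by observing that Theorem \ref{derivabilitedeX} already exhibits $(\langle\n X_t,\xi\rangle_\H)_t$ as a solution in $L^r(\Omega\times[0,1])$, whereas you reconstruct a solution from scratch by a Picard scheme for the linear equation with bounded adapted random coefficients. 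Your route is longer but more self-contained — it shows that \eqref{equationdenabla} is solvable for any free term of the form $K(\sigma(X)\xi)$ without appealing to the Malliavin-differentiability of $X$, and it makes explicit why the randomness of the coefficients $\partial_x b(u,X_u)$, $\partial_x\sigma(u,X_u)$ is harmless (they are bounded and progressively measurable, so \eqref{eq:4} and \eqref{eq:6} apply unchanged). The paper's route is shorter and reflects the real role of this theorem in the sequel, which is uniqueness: it is the uniqueness statement that lets Theorem \ref{parametervariation} identify $\langle\n X_t,\xi\rangle_\H$ with the explicitly constructed process $Y_t$. Either way the conclusion is the same, and I see no gap in your argument.
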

\begin{proof}
  Theorem \ref{derivabilitedeX} stands that the process
  $(\langle\n X_t,\, \xi\rangle_\H,\, t\in [0,1])$ is a  solution
  of \eqref{equationdenabla} with the desired integrability property.

If $Y$ and $Z$ are two such solutions, according to hypothesis (A) and
to theorem \ref{thm:regularite}, we have~: 
\begin{equation*}
  \esp{|Y_t-Z_t|^r}\le c \int_0^t \esp{|Y_s-Z_s|^r}\, ds.
\end{equation*}
By iteration, this induces that $Y=Z,$ $\P\ox dt$ almost everywhere. 
\end{proof}
\begin{thm}
  Assume that the hypothesis of theorem \ref{derivabilitedeX} hold.  
  Let $V_0(t,s)$ be a measurable deterministic kernel such that~:
  \begin{equation}
    \label{eq:7}
    \int_0^1\!\!\int_0^1 |V_0(u,s)|^r \, du\, ds < \infty.
  \end{equation}
 For   $n\ge 1,$ consider
   \begin{multline*}
     V_{n+1}(t,s) = \int_s^t K(t,u) \frac{\partial b}{\partial
       x}(u,X_u) V_n(u,s)\, du \\ + \int_s^t K(t,u) \frac{\partial
       \sigma}{\partial x}(u,X_u) V_n(u,s) \, dB_u.
\end{multline*}
The two following  properties hold~:
\begin{description}
\item[V1] $L(t,s)=\sum_{n=0}^{+\infty} V_n(t,s)$ is a convergent
  series in $L^{r}(\Omega\x [0,1]^2).$ 
\item[V2] $L(t,s)$ is a solution of
        \begin{multline}
\label{eq2param}
L(t,s)-V_0(t,s)=\int_s^t K(t,u) \frac{\partial b}{\partial x}(u,X_u) L(u,s)\, du \\
+ \int_s^t K(t,u) \frac{\partial \sigma}{\partial x}(u,X_u) L(u,s) \,
dB_u.
  \end{multline}
\end{description}
  \end{thm}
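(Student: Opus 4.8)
The plan is to repeat, at the level of the parametrised family $\{V_n(\cdot,s)\}_{s\in[0,1]}$, the Picard-type estimate used in the proofs of Theorems~\ref{existence} and~\ref{derivabilitedeX}, treating $s$ as a frozen parameter and extracting from Theorem~\ref{thm:regularite} a bound for $V_{n+1}(\cdot,s)$ in terms of $V_n(\cdot,s)$ that is uniform in $s$.

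First fix $s$ and note that, for an adapted process $f$, one has $\int_s^t K(t,u) f(u)\,du=K\bigl(f\mathbf 1_{[s,1]}\bigr)(t)$ and $\int_s^t K(t,u) f(u)\,dB_u=\int_0^t K(t,u)\bigl(f(u)\mathbf 1_{[s,1]}(u)\bigr)\,dB_u$. By induction on $n$, $V_n(\cdot,s)$ admits, for a.e.\ $s$, a progressively measurable adapted version --- this holds for $n=0$ because $V_0$ is deterministic, and it propagates through the two integrals --- and moreover, combining (A), the drift bound~(\ref{eq:4}) and the maximal inequality~(\ref{eq:1}) of Theorem~\ref{thm:regularite} (both applied on $[0,t]$, for the fixed exponent $r$) with the boundedness of $\partial b/\partial x$ and $\partial\sigma/\partial x$, one obtains for every $n\ge 0$ and every $s\le t\le 1$
\[
\esp{|V_{n+1}(t,s)|^r}\ \le\ c\,\esp{\int_s^t |V_n(u,s)|^r\,du}.
\]
Iterating this inequality $n$ times and using Fubini gives
\[
\esp{|V_n(t,s)|^r}\ \le\ \frac{c^n}{(n-1)!}\int_s^t |V_0(u,s)|^r\,(t-u)^{n-1}\,du\ \le\ \frac{c^n}{(n-1)!}\int_s^1 |V_0(u,s)|^r\,du,
\]
whence, integrating over $(t,s)\in[0,1]^2$ and using~(\ref{eq:7}),
\[
\lVert V_n\rVert_{L^r(\Omega\x[0,1]^2)}^r\ \le\ \frac{c^n}{(n-1)!}\,\lVert V_0\rVert_{L^r([0,1]^2)}^r .
\]
Since $\sum_{n\ge 1}\bigl(c^n/(n-1)!\bigr)^{1/r}<\infty$, the series $\sum_{n\ge 0}V_n$ converges absolutely, hence converges, in the Banach space $L^r(\Omega\x[0,1]^2)$; this is \textbf{V1}. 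Being an $L^r$-limit of adapted processes, $L(\cdot,s)$ is itself adapted for a.e.\ $s$.

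For \textbf{V2}, let $\mathcal A$ be the linear map carrying an adapted $W\in L^r(\Omega\x[0,1]^2)$ to
\[
\mathcal A[W](t,s)=\int_s^t K(t,u)\frac{\partial b}{\partial x}(u,X_u)W(u,s)\,du+\int_s^t K(t,u)\frac{\partial \sigma}{\partial x}(u,X_u)W(u,s)\,dB_u .
\]
The very estimate above, with $(V_n,V_{n+1})$ replaced by $(W,\mathcal A[W])$ and then integrated in $(t,s)$, shows that $\mathcal A$ is bounded, $\lVert\mathcal A[W]\rVert_{L^r(\Omega\x[0,1]^2)}\le c\,\lVert W\rVert_{L^r(\Omega\x[0,1]^2)}$. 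The recursion is precisely $V_{n+1}=\mathcal A[V_n]$, so by linearity $\sum_{n=1}^N V_n=\mathcal A\bigl[\sum_{n=0}^{N-1}V_n\bigr]$ for every $N$. Letting $N\to\infty$: the left-hand side tends to $L-V_0$ in $L^r$, while $\sum_{n=0}^{N-1}V_n\to L$ in $L^r$ and the continuity of $\mathcal A$ forces $\mathcal A\bigl[\sum_{n=0}^{N-1}V_n\bigr]\to\mathcal A[L]$. Hence $L-V_0=\mathcal A[L]$, which is exactly~(\ref{eq2param}).

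The only genuinely delicate point is the base step: hypothesis~(\ref{eq:7}) places $V_0$ in $L^r$ only, not in any space of bounded kernels, so one cannot start by taking a supremum in~$t$; the Gronwall iteration must be run directly on $t\mapsto\esp{|V_n(t,s)|^r}$, and it is this that produces the combinatorial factor $1/(n-1)!$ rather than $1/n!$. Everything else is the routine bookkeeping of the measurability of the parametrised family $V_n(\cdot,s)$ and the application of Theorem~\ref{thm:regularite} uniformly in the parameter~$s$.
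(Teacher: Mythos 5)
Your proof is correct and follows essentially the same route as the paper's: the key estimate $\esp{|V_{n+1}(t,s)|^r}\le c\,\esp{\int_s^t|V_n(u,s)|^r\,du}$ obtained from hypothesis (A) and Theorem \ref{thm:regularite}, iterated to produce factorial decay of $\lVert V_n\rVert_{L^r(\Omega\x[0,1]^2)}$, and then passage to the limit in the partial-sum recursion for \textbf{V2}. The paper merely runs the Gronwall iteration on the doubly integrated quantity $\psi_n(\zeta)=\int_0^\zeta\int_0^1|V_n(u,s)|^r\,ds\,du$ rather than pointwise in $(t,s)$, which is an immaterial difference in bookkeeping.
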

  \begin{proof}
    By the techniques used above, we can show that~:
    \begin{equation*}
\int_0^\zeta \int_0^1 |V_{n+1}(t,s)|^r \, dt \, ds \le \, c \,
\int_0^\zeta \int_0^1 \int_s^t |V_n(u,s)|^r \, du \, ds \, dt,
    \end{equation*}
hence if we set 
$$\psi_n(t)=\int_0^t \! \! \int_0^1 |V_n(u,s)|^r\, ds \, du,$$
the previous equation reads as 
\begin{equation*}
  \psi_{n+1}(\zeta)\le \, c \, \int_0^\zeta \psi_n(t)\, dt.
\end{equation*}
Applying Gronwall lemma and (\ref{eq:7}), it follows that the series
$\sum_n V_n$ is convergent in $L^r(\Omega\x [0,1]^2).$ Moreover, it is
straightforward that~:
 \begin{multline*}
   \int_s^t K(t,u) \frac{\partial b}{\partial x}(u,X_u) \sum_{j=0}^ n
   V_j(u,s)\, du \\ + \int_s^t K(t,u) \frac{\partial \sigma}{\partial
     x}(u,X_u) \sum_{j=0}^ n V_j(u,s) \, dB_u = \sum_{j=0}^ {n+1}
   V_j(t,s)-V_0(t,s).
 \end{multline*}
 According to hypothesis (A) and theorem \ref{thm:regularite}, the
 left-hand-side of the last equation converges to the right-hand-side
 of (\ref{eq2param}).
  \end{proof}
\noindent \textbf{\textsc{Hypothesis} (B)--}\textsl{
  We assume that there exist $g$ an almost surely positive function
  such that $V_0(t,s)=K(t,s)g(s)$ satisfies (\ref{eq:7}).}

\begin{thm}[Parameter variation formula]
\label{parametervariation}
Assume that all the hypothesis made so far  hold.  Consider
the space $\HH$ of elements of $\H$ satisfying $\xi g^{-1}\in \L^{r}.$
 Let $L$ be defined as in the previous theorem with the value of
$V_0$ taken in hypothesis (B). For any $\xi \in
\HH,$ let
\begin{math}
   Y_t = \int_0^t L(t,s) \sigma(X_{s})g^{-1}(s)\xi(s) \, ds.
\end{math}
For any $t\in [0,1],$ we have
\begin{math}
<\n X_t,\, \xi>_\H=  Y_{t}, \ \P
\end{math}
 almost surely.
\end{thm}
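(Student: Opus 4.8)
The plan is to check that the process $Y$ of the statement is a solution, lying in $L^r(\Omega\x[0,1])$, of the linear equation \eqref{equationdenabla} attached to $\xi$, and then to conclude by the uniqueness assertion of Theorem \ref{thmequationdenabla}, recalling that $(\langle\n X_t,\,\xi\rangle_\H,\ t\in[0,1])$ is itself a solution of that equation. The latter fact is precisely \eqref{eq:11}, read for the deterministic test function $\xi\in\HH\subset\H$ (so that $\langle K(t,\cdot)\sigma\circ X,\,\xi\rangle_\H=\int_0^t K(t,s)\sigma(X_s)\xi_s\,ds$); and since $\sigma$ is bounded while $K$ maps $\L^2$ into $\mathcal{B}$, this free term is bounded by $c\lVert\xi\rVert_{\L^2}$, so the Gronwall argument in the proof of Theorem \ref{derivabilitedeX} still yields $(\langle\n X_t,\,\xi\rangle_\H,\ t\in[0,1])\in L^r(\Omega\x[0,1])$ for $\xi\in\HH$.

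First I would split $L(t,s)=V_0(t,s)+\bigl(L(t,s)-V_0(t,s)\bigr)$ inside the integral defining $Y_t$. Because $V_0(t,s)=K(t,s)g(s)$, the $V_0$--contribution is exactly $\int_0^t K(t,s)\sigma(X_s)\xi_s\,ds=\langle K(t,\cdot)\sigma\circ X,\,\xi\rangle_\H$, the free term of \eqref{equationdenabla}. Into the remaining contribution I substitute the identity \eqref{eq2param} satisfied by $L(t,s)-V_0(t,s)$ (applicable since, by hypothesis (B), $V_0$ verifies \eqref{eq:7}); this turns it into a Lebesgue double integral $\int_0^t ds\int_s^t du\,(\cdots)$ for the drift part and a Lebesgue--then--It\^o iterated integral for the diffusion part.

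The key step is then a Fubini interchange — ordinary Fubini for the drift part, the stochastic Fubini theorem for the diffusion part — exchanging $\int_0^t ds\int_s^t du$ for $\int_0^t du\int_0^u ds$. After the exchange the inner integration over $s$ is $\int_0^u L(u,s)\sigma(X_s)g^{-1}(s)\xi(s)\,ds$, which by definition equals $Y_u$, and one is left with
\begin{equation*}
 Y_t=\langle K(t,\cdot)\sigma\circ X,\,\xi\rangle_\H+\int_0^t K(t,u)\frac{\partial b}{\partial x}(u,X_u)Y_u\,du+\int_0^t K(t,u)\frac{\partial\sigma}{\partial x}(u,X_u)Y_u\,dB_u,
\end{equation*}
that is, $Y$ solves \eqref{equationdenabla}. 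That $Y$ belongs to $L^r(\Omega\x[0,1])$ follows from $\sigma$ bounded, $L\in L^r(\Omega\x[0,1]^2)$, $\xi g^{-1}\in\L^r$, and the maximal estimate of Theorem \ref{thm:regularite} used as in the preceding proofs. The uniqueness statement of Theorem \ref{thmequationdenabla} then forces $Y_t=\langle\n X_t,\,\xi\rangle_\H$ almost surely, for every $t\in[0,1]$.

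The main obstacle I anticipate is the rigorous justification of the stochastic Fubini step. One has to verify that $u\mapsto L(u,s)$ is adapted for fixed $s$ — which follows by induction on $n$ from the recursion defining the $V_n$, the It\^o integral up to time $u$ being $\mathcal{F}_u$--measurable — and that the integrand $K(t,u)\frac{\partial\sigma}{\partial x}(u,X_u)L(u,s)\sigma(X_s)g^{-1}(s)\xi(s)$ meets the joint measurability and $L^2$--type integrability conditions on $\{0\le s\le u\le t\}\times\Omega$ under which the $ds$--integration commutes with the Itô integral; it is precisely to guarantee these (and the finiteness of $Y$ itself) that the space $\HH$ and hypothesis (B) were introduced.
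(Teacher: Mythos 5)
Your proposal is correct and follows essentially the same route as the paper: the paper's own proof simply observes that $Y\in L^r(\Omega\x[0,1])$ (since $\sigma$ is bounded), that $Y$ is a formal solution of \eqref{equationdenabla}, and then invokes the uniqueness of Theorem \ref{thmequationdenabla}. What you add — the split $L=V_0+(L-V_0)$, the substitution of \eqref{eq2param}, and the deterministic and stochastic Fubini interchanges — is exactly the computation the paper compresses into the phrase ``it is clear that $Y$ is a formal solution.''
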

\begin{proof}
  For $\xi \in \HH,$  since $\sigma$ is bounded, $Y$ belongs to $L^r(\Omega\x
  [0,1]).$ It is clear that $Y$ is a formal solution of
  (\ref{equationdenabla}) hence by theorem \ref{thmequationdenabla},
  the equality $<\n X_t,\, \xi>_\H=  Y_{t}$ follows.
\end{proof}

\noindent  \textbf{Remark : } For the fractional Brownian motion, it
  is proved in \cite{decreusefond96.2} that 
  \begin{equation*}
    0\le K_H(t,s) \le \, c\, (t-s)^{H-1/2}s^{-|H-1/2|},
  \end{equation*}
  hence hypothesis (B) is satisfied with $g(s)=s^{|H-1/2|}.$ It is a little
  counter-intuitive that as $H$ increases towards $1,$ hypothesis (B)
  requires an increasing value of $\nu$ to be fulfilled. It is due to the
  increasing singularity of $K_H(t,s)$ whereas $K_H$
  as a map is more and more regularizing.

\appendix
\section{Deterministic fractional calculus}
\label{sec:determ-fract-calc}
For $f\in \L^1([0,1])$, the left and right fractional integrals of $f$
are defined by~:
        \begin{align*}
          (I_{0^+}^{\alpha}f)(x) & =
          \frac{1}{\Gamma(\alpha)}\int_0^xf(t)(x-t)^{\alpha-1}dt\ ,\ 
           x \ge 0,\\
          (I_{b^-}^{\alpha}f)(x) & =
          \frac{1}{\Gamma(\alpha)}\int_x^bf(t)(t-x)^{\alpha-1}dt\ ,\ 
          x\le b,
        \end{align*}
        where $\alpha>0$ and $I^0=\Id.$ In what follows, $T $ is a
         real in $(0,1].$ For any $\alpha\ge 0$, any
        $f\in \L^p([0,T])$ and $g\in \L^q([0,T])$ where
        $p^{-1}+q^{-1}\le \alpha$, we have~:
\begin{equation}
  \label{int_parties_frac}
  \int_0^T f(s)(I_{0^+}^\alpha g)(s)\ ds = \int_0^T (I_{T^-}^\alpha 
f)(s)g(s)\ ds.
\end{equation}
The Besov space $I^\alpha_{0^+}(\L^p([0,T]))= \I_{\alpha,p}([0,T])$ is
usually equipped with the norm~:
\begin{equation*}
  \| f \| _{ \I_{\alpha,p}}=\| I^{-\alpha}_{0^+} f\|_{\L^p([0,T])}.
\end{equation*}
We then have the following continuity results (see
\cite{feyel96,samko93})~:
\begin{prop}\label{continuiteintfrac}~\\
For each $0< T\le 1,$
  \begin{enumerate}
  \item \label{inclusionLpLq} If $0<\alpha <1,$ $1< p <1/\alpha,$ then
    $I^\alpha_{0^+}$ is a bounded operator from $\L^p([0,T])$ into
    $\L^q([0,T])$ with $q=p(1-\alpha p)^{-1}.$
  \item \label{thm:inclusions} For any $0< \alpha <1$ and any $p\ge
    1,$ $\I_{\alpha,p}([0,T])$ is continuously embedded in $\Hol_{\alpha-
    1/p}([0,T])$ provided that $\alpha-1/p>0.$ $\Hol_{\nu}([0,T])$ denotes the space
    of H\"older--continuous functions, null at time $0,$ equipped with
    the usual norm~:
    \begin{equation*}
      \lVert f \rVert_{\Hol_{\nu}([0,T])}=\sup\limits_{0\le t\neq s\le T} \frac{| f(t)-f(s)| }{|t-s|^\nu}.
    \end{equation*}
  \end{enumerate}
\end{prop}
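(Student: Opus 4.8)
The plan is to treat the two assertions separately, as they rest on different classical mechanisms. For part \ref{inclusionLpLq}, I would recognise $I^\alpha_{0^+}f$ as the one-sided convolution $f * k_\alpha$, where $k_\alpha(x)=x^{\alpha-1}1_{\{x>0\}}/\Gamma(\alpha)$. On the bounded interval $[0,T]$ this kernel lies in the weak Lebesgue space $L^{1/(1-\alpha),\infty}$, since its distribution function is of size $\lambda^{-1/(1-\alpha)}$. The boundedness $\L^p\to\L^q$ then follows from the weak-type Young convolution inequality (equivalently the Hardy--Littlewood--Sobolev inequality in dimension one), whose exponent relation $1+1/q=1/p+(1-\alpha)$ is precisely $q=p(1-\alpha p)^{-1}$; the hypotheses $1<p<1/\alpha$ guarantee $1<q<\infty$ so that the inequality applies. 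This is the content of the cited references, and I would either invoke it directly or reprove it by the usual splitting of $k_\alpha$ into its restrictions to $\{x<R\}$ and $\{x>R\}$ followed by an estimate of the distribution function of $|f*k_\alpha|$.

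For part \ref{thm:inclusions} I would argue by direct computation. Writing $f=I^\alpha_{0^+}g$ with $g\in\L^p$ and $\|g\|_{\L^p}=\|f\|_{\I_{\alpha,p}}$, the value $f(0)=0$ is immediate, so only the H\"older seminorm needs bounding. For $0\le s<t\le T$ I would split
\begin{equation*}
\Gamma(\alpha)\bigl(f(t)-f(s)\bigr)=\int_s^t g(u)(t-u)^{\alpha-1}\,du+\int_0^s g(u)\bigl((t-u)^{\alpha-1}-(s-u)^{\alpha-1}\bigr)\,du,
\end{equation*}
and estimate each piece by H\"older's inequality with conjugate exponent $p'=p/(p-1)$. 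The first integral contributes $\bigl(\int_s^t(t-u)^{(\alpha-1)p'}du\bigr)^{1/p'}\|g\|_{\L^p}$, and a direct computation of the elementary integral (legitimate because $\alpha>1/p$ forces $(\alpha-1)p'>-1$) yields exactly $c\,(t-s)^{\alpha-1/p}\|g\|_{\L^p}$.

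The crux is the second, ``tail difference'' integral. Here I would substitute $v=s-u$ and set $h=t-s$, reducing the $\L^{p'}$ norm of the kernel difference to $\int_0^s|v^{\alpha-1}-(v+h)^{\alpha-1}|^{p'}\,dv$; the further scaling $v=hw$ factors out exactly $h^{(\alpha-1)p'+1}=h^{(\alpha-1/p)p'}$, leaving the dimensionless integral $\int_0^{s/h}|w^{\alpha-1}-(w+1)^{\alpha-1}|^{p'}\,dw$. The main obstacle --- and the only place the hypothesis $\alpha-1/p>0$ is genuinely needed --- is to show this integral is bounded uniformly in $s/h$, i.e.\ that $\int_0^{\infty}|w^{\alpha-1}-(w+1)^{\alpha-1}|^{p'}\,dw<\infty$. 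Near $w=0$ the integrand behaves like $w^{(\alpha-1)p'}$, integrable precisely when $\alpha>1/p$; near $w=\infty$ the mean value theorem gives behaviour $w^{(\alpha-2)p'}$, integrable automatically since $\alpha<1$. Collecting the two pieces yields $|f(t)-f(s)|\le c\,(t-s)^{\alpha-1/p}\|f\|_{\I_{\alpha,p}}$, which is the claimed continuous embedding.
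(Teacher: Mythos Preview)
Your argument is correct on both counts: part~\ref{inclusionLpLq} is indeed the one-dimensional Hardy--Littlewood--Sobolev (weak Young) inequality applied to the convolution kernel $x^{\alpha-1}1_{\{x>0\}}/\Gamma(\alpha)\in L^{1/(1-\alpha),\infty}$, and your direct H\"older-plus-scaling estimate for part~\ref{thm:inclusions} is the standard route, with the convergence of $\int_0^\infty|w^{\alpha-1}-(w+1)^{\alpha-1}|^{p'}\,dw$ correctly identified as the place where $\alpha>1/p$ enters.

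The paper, however, does not prove this proposition at all: it is stated in the appendix as a known result with a bare citation to \cite{feyel96,samko93}, and no argument is given. So there is nothing to compare against beyond noting that your proof is exactly the kind of computation one finds in those references (Samko--Kilbas--Marichev in particular). One minor remark: your treatment of part~\ref{thm:inclusions} implicitly assumes $p>1$ when invoking the conjugate exponent $p'$, but this is harmless since the hypothesis $\alpha-1/p>0$ together with $\alpha<1$ forces $p>1$ anyway.
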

By $I^{-\alpha}_{0^+},$ respectively $I^{-\alpha}_{1^-},$ we mean the
inverse map of $I^{\alpha}_{0^+},$ respectively $I^{\alpha}_{1^-}.$
When we don't precise the interval $[0,T]$ in the notations of $\L^p$
spaces or of Besov spaces, it is meant that $T=1.$
\section{Malliavin calculus}
\label{sec:malliavin}
We only give the few results we need, further details can be found in
   \cite{malliavin_book,ustunel_book}.
We work on the standard Wiener space $(\Omega,{\H},\P)$ where $\Omega$ is the
Banach space of continuous functions from $[0,1]$ into $\rr,$ null
at time $0,$ equipped with the sup-norm. ${\H}$ is the Hilbert space of
absolutely continuous function with the norm $\| h \|_{{\H}}= \| \dot{h}
\|_{\L^2},$ where $\dot{h}$ is the time derivative of $h.$ A
mapping $\phi$ from $\Omega$ into some separable Hilbert space $X$ is
called cylindrical if it is of the form
$\phi(w)=f(<v_1,w>,\cdots,<v_n,w>)$ where $f\in C_0^\infty (\rr^n,
X)$ and $v_i\in \Omega^\star$ for $i=1,\cdots,n$. For such a function we
define $\n \phi$ as
$$
\n\phi(w)=\sum_{i=1}^n \partial_i
f(<v_1,w>,\cdots,<v_n,w>){\tilde{v}}_i\,,
$$
where $\tilde{v}_i$ is the image of $v_i$ under the injection
$\Omega^\star\hookrightarrow \L^2$.  From the quasi-invariance of the Wiener
measure, it follows that $\n$ is a closable operator on $L^p(\Omega;X)$,
$p\geq 1$, and we will denote its closure with the same notation. The
powers of $\n$ are defined by iterating this procedure. For $p>1$,
$k\in \nn$, we denote by $\D_{p,k}(X)$ the completion of $X$-valued
cylindrical functions under the following norm
$$
\|\phi\|_{p,k}=\sum_{i=0}^k \|\n^i\phi\|_{L^p(\Omega;X\otimes (\L^2)^{\otimes
    i})}\,.
$$
Let us denote by $\delta$ the formal adjoint of $\n$ with respect
to Wiener measure, a classical result stands that $\delta$ is an
extension of the It\^o integral thus we have
\begin{equation}
\label{eq:partiesmalliavin}
  \esp{\int_0^t  u_s \, dB_s\  \varphi}= \esp{\int_0^t u_s
    \nabla_s \varphi \, ds}
\end{equation}
for any $u$ adapted in $L^2(\Omega;\L^2)$ and any $\varphi \in \D_{2,1},$
where $\{B_t= \delta({\mathbf 1}_{[0,t]}), \, t\in [0,1]\}$ is a
standard Brownian motion on $(\Omega,\P).$ 

\end{document}